\DeclareSymbolFont{cyrletters}{OT2}{wncyr}{m}{n}
\DeclareMathSymbol{\Sha}{\mathalpha}{cyrletters}{"58}
\title{Computing Congruence Primes Between a Newform With Integer Coefficients and the Old Space}
\date{}
\begin{document} 
\maketitle
\numberwithin{equation}{section}
\newtheorem{thm}{Theorem}
\newtheorem{df}{Definition}
\newtheorem{prop}{Proposition}
\newtheorem{conj}{Conjecture}
\newtheorem{lem}{Lemma}
\newtheorem{cor}{Corollary}
\newtheorem{alg}{Algorithm}
\newcommand{\Q}{\mathbb{Q}}
\newcommand{\R}{\mathbb{R}}
\newcommand{\Z}{\mathbb{Z}}
\newcommand{\T}{\textbf{T}}
\newcommand{\h}{\mathfrak{H}}
\newcommand{\C}{\mathbb{C}}
\newcommand{\F}{\mathbb{F}}

\begin{abstract}
In this article, we put together some known theoretical results and the fact that certain computations can be done efficiently in SAGE to come up with a fast algorithm for calculating congruence primes linking a newform with integer coefficients (i.e. a newform associated to an elliptic curve) with the old space at the same level. 
\end{abstract} 

\section{Introduction}

Let $N$ be a positive integer. Throughout this article, let $f$ be a newform of weight 2 and level $N$ with integer coefficients.  Let $X$ be some subspace of $S_{2}(\Gamma_{0}(N))$. Recall that we call a prime $p$ a \textit{congruence prime} between $f$ and $X$ if there exists some cusp form $g$ in $X$ with integer Fourier coefficients such that $a_{n}(f)\equiv a_{n}(g)$ mod $p$ for all $n$, (in which case, we write $f \equiv g$ mod $p$). Congruence primes play an important role in number theory. For instance, they were used extensively in the work of Ribet \cite{Rib90} and Wiles \cite{Wil95}, among others, leading to the proof of Fermat's Last Theorem.

Let $M_{X}$ be the set of forms in $X$ with integer coefficients. Suppose that one can efficiently compute a finite set $\{g_{1}\ldots g_{r}\}$ of forms in $M_{X}$ that generate $M_{X}$ as a $\Z$ module. (This is possible, for example, when $X$ is the old space.) In this article, we give a method for quickly computing congruence primes between $f$ and such a space $X$.

We became interested in computing congruence primes when trying to test two conjectures of Agashe regarding cancellations in the conjectural Birch and Swinnerton-Dyer formula for the elliptic curve associated to $f$ and their relation to congruences between $f$ and old forms, which we describe in detail in Section 5. Agashe was able to test these conjectures using the SAGE \cite{SAGE} command congruence\_number() for curves of conductor up to 1000, at which point computations became too slow to gather more data. The method we present is fast up to level 1500 and is fast up to much higher levels when the levels $N$ are nonsmooth (e.g. if $N$ is a product of two primes). In Sections 2 and 3, we present the algorithm for the general subspace $X$. In Section 4, we specialize to the case when $X$ is the old space, where it is especially easy to compute an integral spanning set for $M_{X}$. In Section 5, we explore the algorithm's motivation and main application and compare it to congruence\_number().

\section{Deciding if a Given Prime is a Congruence Prime}\label{deciding}


Recall that $f$ is a newform of level $N$ with integer coefficients, $X$ is a subspace of $S_{2}(\Gamma_{0}(N))$, and $M_{X}$ is the set of forms in $X$ with integer coefficients. In this section, we give a criterion for deciding if a given prime $p$ is a congruence prime between a newform $f$ and $X$. 

Let 
\begin{equation}\label{stmeq} B=\bigg{\lfloor} \frac{[SL_{2}(\Z):\Gamma_{0}(N)]}{6}-\frac{[SL_{2}(\Z):\Gamma_{0}(N)]-1}{N}\bigg{\rfloor}. \end{equation}

The following is a consequence of Theorem 1 in \cite{Stu87}.

\begin{thm}[Sturm]\label{sturmthm} Let $h$ and $g$ be two cusp forms in $S_{2}(\Gamma_{0}(N))$ and let $p$ be a prime. If $a_{n}(h)\equiv a_{n}(g)$ mod $p$ for all $n$ up to $B$ then $a_{n}(h)\equiv a_{n}(g)$ mod $p$ for all $n$.
\end{thm}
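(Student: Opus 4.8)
The plan is to transfer the congruence from $\Gamma_0(N)$ to $SL_2(\Z)$, where the valence formula gives a sharp bound on the order of vanishing at the cusp, and then to combine that bound with the integral structure of the space of weight‑$2m$ forms on $SL_2(\Z)$ in order to control reductions modulo $p$. It suffices to prove the contrapositive. Put $F = h - g \in S_2(\Gamma_0(N))$; we may assume its $q$-expansion has $p$-integral coefficients (it has integer coefficients in the application, which is all that is needed, and the general case is Theorem~1 of \cite{Stu87}). If every $a_n(F)$ is divisible by $p$ there is nothing to prove, so assume not, and let $t := \mathrm{ord}_p(F)$ be the least index with $p \nmid a_n(F)$; the goal is to show $t \le B$, i.e.\ that some $a_n(F)$ with $n \le B$ is a $p$-unit.

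Set $m = [SL_2(\Z):\Gamma_0(N)]$ and fix representatives $\gamma_1 = I, \gamma_2, \ldots, \gamma_m$ for the cosets $\Gamma_0(N)\backslash SL_2(\Z)$. The first step is to form the norm $G = \prod_{i=1}^{m} F|_2\gamma_i$. Since $SL_2(\Z)$ permutes the slashed forms $F|_2\gamma_i$ (up to the weight‑$2$ automorphy factor), $G$ is a holomorphic modular form of weight $2m$ on $SL_2(\Z)$, and it is a cusp form because $F$, as a cusp form on $\Gamma_0(N)$, vanishes at every cusp, so each factor vanishes at $\infty$. Next I would note that, provided $p \nmid N$, the $q$-expansion of $G$ has $p$-integral coefficients: each $F|_2\gamma_i$ has a $q^{1/h_i}$-expansion, where $h_i \mid N$ is the width of the cusp $\gamma_i\infty$, with coefficients in $\Z[\zeta_N, 1/N]$, and $G$, being $SL_2(\Z)$-invariant, in fact has an integral-power $q$-expansion over that ring. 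Reducing modulo a prime $\mathfrak p$ over $p$ yields $\overline{G}$ in the mod‑$\mathfrak p$ reduction of $M_{2m}(SL_2(\Z))$; moreover $\overline{G} \ne 0$, since reduction commutes with the slash action, so $\overline{F} \ne 0$ forces every $\overline{F|_2\gamma_i}$ to be a nonzero element of the integral domain $\overline{\F}_p[[q^{1/N}]]$, hence so is their product.

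Now I would squeeze $t$ between two estimates. Upper bound: $M_{2m}(SL_2(\Z))$ has a Victor Miller basis, a $\Z$-basis in echelon form $\{q^j + O(q^{d+1})\}_{j=0}^{d}$ with $d+1 = \dim_{\C} M_{2m}(SL_2(\Z))$, which stays a basis after reduction; hence every nonzero element of the reduced space, in particular $\overline{G}$, satisfies $\mathrm{ord}_q(\overline{G}) \le d \le \lfloor m/6 \rfloor$. Lower bound: $\mathrm{ord}_q(\overline{G}) = \sum_{i=1}^{m} \mathrm{ord}_q\!\big(\overline{F|_2\gamma_i}\big)$, where the $i=1$ term equals $t$, while for $i \ge 2$ the series $\overline{F|_2\gamma_i}$ is nonzero, has zero constant term, and lies in $\overline{\F}_p[[q^{1/h_i}]]$, so contributes at least $1/h_i \ge 1/N$. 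Combining, $t + (m-1)/N \le \lfloor m/6 \rfloor \le m/6$, and since $t$ is an integer this forces $t \le \lfloor m/6 - (m-1)/N \rfloor = B$, which is the contrapositive.

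The crux — what makes this more than bookkeeping with the valence formula — is the passage from the characteristic‑$0$ bound $\mathrm{ord}_q(G) \le m/6$ to a bound on the mod‑$p$ quantity $t$: a form may vanish to low order at $\infty$ over $\C$ while having all of its small coefficients divisible by $p$, so one genuinely needs the norm to land in level $1$ with a $p$-integral $q$-expansion, the integral (Victor Miller) structure of $M_{2m}(SL_2(\Z))$ to transport the dimension bound to characteristic $p$, and the nonvanishing $\overline{G}\ne 0$. I expect the one delicate point to be the case $p \mid N$ excluded above, where the $N$-th roots of unity occurring in the $F|_2\gamma_i$ are not $p$-units and the naive reduction breaks down; this is handled in \cite{Stu87} by replacing the analytic $q$-expansion argument with reduction on an integral model of $X_0(N)$ (equivalently, Katz's algebraic modular forms), the bound being unchanged. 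Finally, the sharpening of $B$ relative to Sturm's general bound $\tfrac{k}{12}[SL_2(\Z):\Gamma_0(N)]$, which for $k=2$ reads $m/6$, is exactly the extra term $(m-1)/N$ extracted above from the vanishing of the $m-1$ factors $F|_2\gamma_i$ with $i\ge 2$ at $\infty$, i.e.\ from $F$ being a cusp form.
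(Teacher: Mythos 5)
The paper does not actually prove this statement: it is quoted as a consequence of Theorem~1 of \cite{Stu87}, with the cusp-form refinement already absorbed into the definition of $B$ in \eqref{stmeq}. So your proposal does more than the paper does, and it is a faithful reconstruction of the standard argument behind that citation: pass to the norm $G=\prod_{i}F|_2\gamma_i$, of level one and weight $2m$, use $p$-integrality of the slashed expansions, bound $\mathrm{ord}_q(\overline G)$ above by $\lfloor m/6\rfloor$ via the integral echelon (Miller) basis of $M_{2m}(SL_2(\Z))$, and below by $t+(m-1)/N$ because the $m-1$ non-identity factors are series in $q^{1/h_i}$, $h_i\mid N$, with vanishing constant term; the floor manipulation then yields exactly $t\le B$, which matches \eqref{stmeq}. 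This is the right skeleton and it correctly explains where the extra $(m-1)/N$ (absent from the generic Sturm bound $km/12$) comes from.

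Two caveats about where the real work sits. First, your one-line justification of $\overline{F|_2\gamma_i}\neq 0$ (``reduction commutes with the slash action'') is itself the nontrivial input: for $p\nmid N$ it rests on the $q$-expansion principle for modular forms over $\Z[\zeta_N,1/N]$, i.e.\ on Katz's theory, not on formal bookkeeping. You can sidestep it with a Gauss-lemma normalization: divide each factor by a uniformizer raised to the minimal valuation of its coefficients; the normalized reductions are nonzero by construction, their product in the domain $\overline{\F}_p[[q^{1/N}]]$ is the reduction of a $p$-integral scalar multiple of $G$, the non-identity factors still have zero constant term, and the identity factor needs no normalization since $F$ has integer coefficients with some coefficient a $p$-unit; the inequality $t+(m-1)/N\le \lfloor m/6\rfloor$ then follows as before, with no appeal to nonvanishing of each original factor mod $p$. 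Second, the case $p\mid N$, which you defer to \cite{Stu87}, is not idle here: the candidate primes in Algorithm~\ref{alg} explicitly include primes with $p^2\mid N$, so the theorem is invoked in that case too. Deferring it matches the paper's own level of rigor (the paper cites Sturm for the whole statement), but be aware that your argument as written establishes only the $p\nmid N$ case.
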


The integer $B$ above is called the \textit{Sturm bound} of $\Gamma_{0}(N)$. If $g$ is a cusp form, then let $v(g)$ denote the row vector whose components are the first $B$ coefficients of $g$. Let $\{g_{1},\ldots, g_{r}\}$ be a set of cusp forms with integer Fourier coefficients whose $\Z$-span is $M_{X}$. Let $M$ be the integer matrix whose $i$-th row is $v(g_{i})$ for $i$=1, \ldots, $r$.

\begin{lem}\label{biglemma}
The newform $f$ with integral coefficients is congruent modulo $p$ to a form in $X$ if and only if the $v(f)$ is in the row space of $M$ modulo $p$.
\end{lem}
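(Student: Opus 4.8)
The plan is to prove the equivalence directly from the definition of congruence prime together with Sturm's theorem. First I would set up notation: recall that $f \equiv g \bmod p$ for some $g \in X$ means, by definition, that there is a cusp form $g$ with integral Fourier coefficients lying in $X$ (hence $g \in M_X$) such that $a_n(f) \equiv a_n(g) \bmod p$ for all $n \geq 1$. Since $\{g_1,\ldots,g_r\}$ spans $M_X$ over $\Z$, any such $g$ can be written $g = \sum_{i=1}^r c_i g_i$ with $c_i \in \Z$, so that $v(g) = \sum_i c_i v(g_i)$, i.e. $v(g) = \mathbf{c} M$ where $\mathbf{c} = (c_1,\ldots,c_r)$. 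Reducing modulo $p$, this says precisely that $v(g)$ lies in the row space of $M$ modulo $p$.

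For the forward direction, suppose $f \equiv g \bmod p$ with $g \in M_X$. Then in particular $a_n(f) \equiv a_n(g) \bmod p$ for all $n \le B$, so $v(f) \equiv v(g) \bmod p$; combined with the paragraph above, $v(f)$ is congruent mod $p$ to an element of the row space of $M$, hence lies in the row space of $M$ modulo $p$. This direction uses nothing beyond the definition.

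For the converse — which is where Sturm's theorem does the work — suppose $v(f)$ lies in the row space of $M$ modulo $p$. Then there exist integers $c_1,\ldots,c_r$ with $v(f) \equiv \sum_i c_i v(g_i) \bmod p$. Set $g = \sum_i c_i g_i \in M_X \subseteq X$; this is a cusp form in $S_2(\Gamma_0(N))$ with integral coefficients, and by construction $a_n(f) \equiv a_n(g) \bmod p$ for all $n \le B$. Now apply Theorem~\ref{sturmthm} (Sturm's bound) with $h = f$: since the first $B$ coefficients of $f$ and $g$ agree modulo $p$ and $B$ is the Sturm bound for $\Gamma_0(N)$, we conclude $a_n(f) \equiv a_n(g) \bmod p$ for \emph{all} $n$. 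Hence $f \equiv g \bmod p$ with $g \in X$, as required.

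The only genuine subtlety — the part I would be most careful about — is making sure the definition of ``$v(f)$ is in the row space of $M$ modulo $p$'' is interpreted correctly: it should mean that the reduction $\bar v(f) \in \F_p^B$ lies in the $\F_p$-span of the reductions $\bar v(g_i)$, equivalently that $\bar M$ and the matrix obtained by appending the row $\bar v(f)$ have the same $\F_p$-rank. One should note that lifting an $\F_p$-linear relation among the $\bar v(g_i)$ back to an integer combination $\sum c_i g_i$ is harmless: we only need congruence of coefficients modulo $p$, not an integral identity, so any integer lift of the coefficients $\bar c_i$ works. Everything else is a formal unwinding of definitions, with Sturm's theorem supplying the one nontrivial input that finitely many coefficient congruences suffice.
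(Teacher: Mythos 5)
Your proof is correct and follows essentially the same route as the paper's: unwind the definition via the $\Z$-spanning set $\{g_1,\ldots,g_r\}$ for one direction, and invoke Sturm's theorem (Theorem~\ref{sturmthm}) to upgrade the first $B$ coefficient congruences to all $n$ for the other. Your added remark that any integer lift of the mod-$p$ coefficients suffices is a reasonable clarification, but it is not a substantive difference from the paper's argument.
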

\begin{proof}
If $v(f)$ is in the row space of $M$ modulo $p$, then we must have a row $w=\sum_{i=1}^{r} c_{i}v(g_{i})$ in the row space of $M$ such that $w_{n}\equiv v(f)_{n}$ mod $p$ for all $n$ up to $B$. Thus $a_{n}(\sum_{i=1}^{r}c_{i}g_{i})\equiv a_{n}(f)$ mod $p$ for all $n$ up to $B$. Theorem \ref{sturmthm} applies so that $a_{n}(\sum_{i=1}^{r}c_{i}g_{i})\equiv a_{n}(f)$ mod $p$ for all $n$.

Conversely, if there exists a form $g=\sum_{i=1}^{r}c_{i}g_{i}$ in $M_{X}$ such that $a_{n}(g)\equiv a_{n}(f)$, for all $n$, then $v(g)_{n}\equiv v(f)_{n}$ for all $n$ up to $B$. Then $v(f)\equiv \sum_{i=1}^{r} c_{i}v(g_{i})$ mod $p$, so that $v(f)$ is already in the mod-$p$ span of $M$.
\end{proof}

In section \ref{thisref}, we describe how one can efficiently compute $M$ associated to $X$ in the case that $X$ is the old space, so that Lemma \ref{biglemma} can be used to test if a prime is a congruence prime linking a newform to the old space.
\medskip


\section{Finding all Congruence Primes}
In this section, we describe an algorithm for finding all congruence primes between the newform $f$ and the subspace $X$ of $S_{2}(\Gamma_{0}(N))$ when one is given a finite set of forms that generate $M_{X}$ as a $\Z$ module. 

Let $E$ be the elliptic curve over $\Q$ associated to $f$. Consider the map $\phi:X_{0}(N) \rightarrow J_{0}(N)$ given by $\phi:P \mapsto (P)-(\infty)$. The map $\phi$ induces a surjective morphism $$X_{0}(N)\rightarrow J_{0}(N) \rightarrow E.$$ The degree of this composite morphism is called the \textit{modular degree} of $E$.
\medskip

The following is theorem 3.11 in \cite{ARS06}.
\begin{thm}[Agashe, Ribet, Stein]\label{bigthm}
Suppose $f$ is a newform of level $N$ with associated elliptic curve $E$, and $f$ is congruent modulo $p$ to a cusp form in its orthogonal complement in $S_{2}(\Gamma_{0}(N))$. Then either $p$ divides the modular degree of $E$ or $p^{2}|N$.
\end{thm}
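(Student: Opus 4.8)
The plan is to reduce the assertion to a prime-by-prime comparison between two integers attached to $f$ --- its congruence number and the modular degree of $E$ --- and to carry out that comparison using the good- and semistable-reduction theory of $J_0(N)$. The first step is to reinterpret the hypothesis, much as in Lemma~\ref{biglemma}. Let $L = S_2(\Gamma_0(N);\Z)$ be the lattice of cusp forms with integral $q$-expansion, let $L_f^{\perp} = L \cap (\C f)^{\perp}$ be its intersection with the Petersson-orthogonal complement of $\C f$, and set
\[ n_E \;:=\; \big[\, L : \Z f \oplus L_f^{\perp} \,\big]. \]
If $g \in L_f^{\perp}$ satisfies $a_n(f) \equiv a_n(g)$ mod $p$ for all $n$, then $\tfrac{1}{p}(f-g) \in L$, and applying the orthogonal projection onto $\C f$ (which kills $g$) shows that $\tfrac1p f$ lies in the projection of $L$, forcing $p \mid n_E$; a short reverse computation gives the converse. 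Hence $p$ is a congruence prime between $f$ and its orthogonal complement if and only if $p \mid n_E$, the \emph{congruence number} of $E$, and it remains to prove: if $p \mid n_E$ and $p^2 \nmid N$, then $p \mid m_E$, where $m_E$ is the modular degree.

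Next I would pass to the Jacobian. Let $\pi_E : J_0(N) \twoheadrightarrow E$ be the optimal quotient attached to $f$, let $\pi_E^{\vee} : E \cong E^{\vee} \hookrightarrow J_0(N)$ be its dual, with image the elliptic curve $B_f \subseteq J_0(N)$, and let $C_f \subseteq J_0(N)$ be the complementary abelian subvariety --- the one cut out by $L_f^{\perp}$. Under the standard dictionary $n_E = \#(B_f \cap C_f)$, while $m_E$ is encoded in the polarization that $B_f \cong E$ inherits from $J_0(N)$ along $\pi_E^{\vee}$; it is classical (Zagier) that $m_E \mid n_E$, and the content I need is the sharper statement that $n_E / m_E$ is divisible only by primes $p$ with $p^2 \mid N$, which then yields the theorem by contraposition. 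The $p$-parts of $n_E$ and $m_E$ are both visible from the $p$-divisible group of $J_0(N)$ together with its principal polarization and the isogeny splitting $J_0(N) \sim B_f \times C_f$, so at each $p$ with $p^2 \nmid N$ the task is to check that this splitting contributes no spurious $p$-part, i.e. $\operatorname{ord}_p(n_E) = \operatorname{ord}_p(m_E)$. For $p \nmid N$ this is the straightforward case: $J_0(N)$, $B_f$, and $C_f$ all have good reduction at $p$, the monodromy pairing vanishes, and the equality drops out of a direct computation with $p$-adic Tate modules and the polarization.

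The hard part will be the primes $p$ with $p \parallel N$, where $J_0(N)$ has only semistable reduction at $p$. There one must combine (i) multiplicity one for the maximal ideal $\mathfrak{p} = (p, I_f)$ of the Hecke algebra acting on $J_0(N)[p]$, with $I_f = \ker(\T \to \Z,\ T_n \mapsto a_n(f))$ --- available since $p \parallel N$ by the results of Mazur, Ribet, and Wiles, but requiring extra care when $p = 2$, where multiplicity one is subtler --- with (ii) an analysis of the toric part of the special fibre and of the component group of $J_0(N)$ via the monodromy pairing, in order to conclude that the integral structure $B_f$ inherits inside $J_0(N)$ agrees $p$-adically with the one pulled back from $E$; this again forces $\operatorname{ord}_p(n_E) = \operatorname{ord}_p(m_E)$. (The $p=2$ issue is only genuinely at stake when $2 \parallel N$, since $4 \mid N$ already gives the conclusion $p^2 \mid N$.) Assembling (i) and (ii) over all $p$ with $p^2 \nmid N$ and feeding the result back into the first step completes the proof; the main obstacle is precisely this semistable-reduction bookkeeping, together with the multiplicity-one input it relies on.
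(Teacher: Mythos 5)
First, a remark on the ground truth: the paper does not prove this statement at all --- it is imported verbatim as Theorem 3.11 of \cite{ARS06} and used as a black box in Algorithm 1 --- so the only meaningful comparison is with the Agashe--Ribet--Stein argument itself. Your opening reduction is correct and is indeed how that reference frames the problem: with $L=S_{2}(\Gamma_{0}(N);\Z)$, $L_{f}^{\perp}=L\cap(\C f)^{\perp}$ and $n_{E}=[L:\Z f\oplus L_{f}^{\perp}]$, the projection-onto-$\C f$ argument shows that $p$ is a congruence prime between $f$ and its orthogonal complement if and only if $p\mid n_{E}$ (and since $L/(\Z f\oplus L_{f}^{\perp})$ embeds into $\Q f/\Z f$ it is cyclic, so no exponent-versus-order issue arises for an elliptic curve). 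Thus the theorem is equivalent to: $p\mid n_{E}$ and $p^{2}\nmid N$ imply $p\mid m_{E}$, which is precisely the main comparison theorem of \cite{ARS06}.

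The problems begin immediately after that. Your ``standard dictionary'' $n_{E}=\#(B_{f}\cap C_{f})$ is false: with $C_{f}=\ker(J_{0}(N)\to E)$ and $B_{f}=\pi_{E}^{\vee}(E^{\vee})$, the composite $E^{\vee}\cong B_{f}\to E$ is multiplication by $m_{E}$ up to the canonical identification, so $\#(B_{f}\cap C_{f})=m_{E}^{2}$; this intersection computes the modular number, not the congruence number. If the identification you assert were true, the theorem would follow for every prime with no hypothesis on $N$, contradicting the examples in \cite{ARS06} where the congruence number strictly exceeds the modular degree (necessarily at primes whose square divides $N$). Controlling the discrepancy between the congruence module $L/(\Z f\oplus L_{f}^{\perp})$ and $B_{f}\cap C_{f}$ is exactly the content of the theorem, and that content is not supplied: the good-reduction case is waved through as ``a direct computation with $p$-adic Tate modules,'' and the semistable case is a list of tools (``combine multiplicity one with an analysis of the toric part and the monodromy pairing\dots this again forces the equality'') with no indication of how the equality $\operatorname{ord}_{p}(n_{E})=\operatorname{ord}_{p}(m_{E})$ actually follows. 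The reliance on multiplicity one is itself delicate: for $\mathfrak{m}=(2,I_{f})$ multiplicity one is known to fail even at some prime levels (Kilford's non-Gorenstein examples), i.e.\ precisely under the hypothesis $p^{2}\nmid N$, so a correct proof must either avoid it there or treat those cases separately; your sketch does neither, and the divisibility $m_{E}\mid n_{E}$ is likewise cited rather than proved. In short, you have correctly reduced the statement to the congruence-number/modular-degree comparison of \cite{ARS06} and named their toolbox, but the proposal as written contains a false identification and leaves the essential steps unexecuted; it is a roadmap to the cited proof rather than a proof.
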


The modular degree can be computed very efficiently. For a history of computational approaches of calculating the modular degree, see the introduction in \cite{Wat02}. In SAGE, Watkins's algorithm \cite{Wat02} for computing the modular degree of an elliptic curve is called by modular\_degree(). 

 
Lemma \ref{biglemma} provides an efficient test to check if a prime is a congruence prime between $f$ and the oldspace, while Theorem \ref{bigthm} gives a finite list of candidate primes that contains all congruence primes. Together, we have

\begin{alg}\label{alg} Given a newform $f$ of level $N$ and a spanning set for $X$ of forms with integer coefficients, this algorithm computes all congruence primes between $f$ and $X$.
\begin{enumerate}
\item Compute the matrix $M$ as described in Section \ref{deciding}.
\item Compute the modular degree and generate finite list of primes $\{p_{1}...p_{n}\}$ such that $p_{i}$ divides the modular degree or $p_{i}^{2}|N$. (Recall that by Theorem \ref{bigthm}, these are the only primes that could be congruence primes between $f$ and $X$.)
\item For each $p_{i}$, check if the $v(f)$ is in the row space of $M$ modulo $p_{i}$.
\item Output the list of primes for which the answer in step three is yes. 
\end{enumerate}
\end{alg}

\section{Computing the Matrix $M$ when $X$ Is the Old Space}\label{thisref}

Let $X$ be the old space at level $N$. We need to find a set of forms $\{g_{1},\ldots,g_{r}\}$ with integral coefficients that generate $M_{X}$ over $\Z$. 
Recall that for some $d|N$, the $d$-degeneracy map  is given by
\begin{equation}\label{dgns}
\beta_{d}:\sum_{i=1}^{\infty} a_{i}(f)q^{i} \mapsto \sum_{i=1}^{\infty} a_{i}(f)q^{di}.
\end{equation}

For each prime $p|N$, all old forms originating from any level dividing $\frac{N}{p}$ factor through $S_{2}(\Gamma_{0}(\frac{N}{p}))$ and pass to $S_{2}(\Gamma_{0}(N))$ by the $1$ and $p$ degeneracy maps.
Thus, to compute an integral spanning set for the old space at level $N$, it suffices to compute an integral basis for the spaces $S_{2}(\Gamma_{0}(\frac{N}{p}))$ for $p|N$ and pass these forms up to level $N$ by the 1 and $p$-degeneracy maps. 
This discussion leads to the following algorithm.
\begin{alg} Given a level $N$, this algorithm returns an integer matrix $M$ whose rows are $v(g_{i})$ where $\{g_{1},\ldots,g_{r}\}$ is a set of cuspforms with integer coefficients that spans the old subspace of $S_{2}(\Gamma_{0}(N))$.
\begin{enumerate}
\item Let $M$ be an empty matrix. 
\item For each prime $p|N$:
\begin{enumerate}
\item Compute an integral basis of $S_{2}(\Gamma_{0}(\frac{N}{p}))$:

\noindent In SAGE, the command integral\_basis() can be used to compute an integral basis for $S_{2}(\Gamma_{0}(\frac{N}{p}))$.
\item Compute the 1-degeneracy images from spaces $S_{2}(\Gamma_{0}(\frac{N}{p}))$:

\noindent Let $g$ be a form in a basis obtained in step (a). By equation \ref{dgns}, $\beta_{1}(g)$=$g$. So for each form $g$ in each the basis obtained in step (a), augment to $M$ the vector $v(g)$, which consists of the first $B$ coefficients of $g$. In SAGE, the command coefficients() can be used to compute the Fourier coefficients of a cusp form.
\item Compute the $p$-degeneracy images from spaces $S_{2}(\Gamma_{0}(\frac{N}{p}))$: 

\noindent By equation \ref{dgns}, the vector $v(\beta_{p}(g))$ is a vector of  length $B$ whose i-th component is zero if $p\nmid i$ and $v(g)_\frac{i}{p}$ if $p|i$. For each $g$ in the basis obtained in step (a), augment to $M$ the vectors $v(\beta_{p}(g))$ .
\end{enumerate}
\item Output the matrix $M$.
\end{enumerate}
\end{alg}

The reason that Algorithm 2 is fast in practice is because the command integral\_basis() in step 2(a) is very efficient. In Section 5, we indicate why Algorithms 1 and 2 compute congruences between $f$ and $X$ faster in SAGE than congruence\_number() when $X$ is the old space. 

\section{Motivation and Application}

In this section, we describe the problems that us to be interested in computing congruences between newforms associated to elliptic curves and the old space, and we indicate why our method is fast compared to congruence\_number() for this case. Let $E/\Q$ be a modular elliptic curve associated to the newform $f$ of level $N$. The right hand side of equation conjectural Birch and Swinnerton Dyer formula, 
$$ \frac{L^{r}(E/\Q,1)}{r! \,\,\Omega}=\frac{|\Sha(E/\Q)|\,\cdot\,\mathrm{R}(E/\Q)\,\cdot\,\prod_{p} c_{p}}{|E_{\mathrm{tors}}(\Q)|^{2}},$$ has been studied extensively (see the introduction in \cite{Lor10}). In particular, in regard to the relationship between the Tamagawa product $\Pi_{p}c_{p}$ and the order of $E_{\mathrm{tors}}(\Q)$, M. Emerton \cite{Eme03} showed that when $N$ is prime, $\Pi_{p}c_{p}=|E_{\mathrm{tors}}(\Q)|$. 

When $N$ is not prime, $|E_{\mathrm{tors}}(\Q)|$ need not equal $\Pi_{p}c_{p}$. In fact, this first occurs when $N=42$. D. Lorenzini \cite{Lor10} has shown that 
if $\ell>3$ is a prime number, then the order of the $\ell$-primary part of $E_\mathrm{tors}(\Q)$ divides $\Pi_{p}c_{p}$. 

In the other direction, Agashe \cite{Aga11} has conjectured that:

\begin{conj}[Agashe]\label{conj1}
If an odd prime $\ell$ divides $\Pi_{p} c_{p}$, then either $\ell$ divides the order of $E_{\mathrm{tors}}(\Q)$ or the newform $f$ is congruent modulo $\ell$ to a form in the old space.
\end{conj}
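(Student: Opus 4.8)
\medskip
\noindent\emph{Sketch of an approach.} The natural way to attack Conjecture~\ref{conj1} is to pass from the arithmetic of $E$ to the semisimple mod-$\ell$ Galois representation $\bar\rho=\bar\rho_{E,\ell}\colon G_{\Q}\to\mathrm{GL}_2(\F_\ell)$ attached to $E$ (equivalently to $f$), and to feed the hypothesis $\ell\mid\prod_p c_p$ into a level-lowering argument. The first reduction is on which bad primes can occur: for an \emph{odd} prime $\ell$ the Tamagawa number of a prime of additive reduction is at most $4$, so $\ell\mid c_p$ forces either (i) $E$ has split multiplicative reduction at $p$, $p\parallel N$, and $\ell\mid c_p=v_p(q_E)=-v_p(j_E)$, where $q_E$ is the Tate parameter; or (ii) $\ell=3$ and $E$ has additive reduction of Kodaira type $\mathrm{IV}$ or $\mathrm{IV}^{\ast}$ at $p$. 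Case (ii) is genuinely exceptional; I would treat it last, and the heart of the argument is case (i).

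In case (i), a Tate-curve computation shows that the Kummer extension generated by $q_E^{1/\ell}$ is unramified over $\Q_p(\zeta_\ell)$ precisely because $\ell\mid v_p(q_E)=c_p$; consequently $\bar\rho$ is unramified at $p$ when $p\neq\ell$, and finite flat at $p$ when $p=\ell$. Suppose first that $\bar\rho$ is irreducible. Then Mazur's principle and Ribet's level-lowering theorem imply that the mod-$\ell$ Hecke eigensystem of $f$ already occurs in $S_2(\Gamma_0(N/p))$: there is a maximal ideal $\mathfrak m$ with residue field $\F_\ell$ in the Hecke algebra of level $N/p$ realizing the reductions mod $\ell$ of the $T_n$-eigenvalues of $f$. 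Since the mod-$\ell$ Hecke algebra acts faithfully on $S_2(\Gamma_0(N/p);\F_\ell)$ (for $\ell\geq5$, by the $q$-expansion principle), its $\mathfrak m$-torsion contains a nonzero normalized eigenform whose $q$-expansion is $\sum_n a_n(f)\,q^n$ reduced mod $\ell$; lifting it through an integral basis of $S_2(\Gamma_0(N/p);\Z)$ produces an integral cusp form $g$ with $a_n(g)\equiv a_n(f)$ mod $\ell$ for \emph{all} $n$ simultaneously. Since the degeneracy maps $\beta_1$ and $\beta_p$ embed $S_2(\Gamma_0(N/p))$ into the old subspace of $S_2(\Gamma_0(N))$ (Section~\ref{thisref}), the form $g$ is old at level $N$, and $f\equiv g$ mod $\ell$ is precisely the second alternative of the conjecture. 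The subcase $p=\ell$ is handled the same way, using the finite-flat form of level-lowering (Ribet, Gross, Carayol).

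The case that I expect to be the real obstacle is when $\bar\rho$ is reducible, say $\bar\rho=\phi\oplus\psi$ with $\phi\psi=\bar\chi_\ell$, so that $E$ carries a rational $\ell$-isogeny whose kernel is a line on which $G_{\Q}$ acts through $\phi$. If $\phi$ is trivial, then $E$ has a rational point of order $\ell$ and $\ell\mid|E_{\mathrm{tors}}(\Q)|$, as required. The awkward subcase is when $\psi$ is trivial: then it is only the $\ell$-isogenous quotient $E/C$, and not $E$ itself, that visibly acquires rational $\ell$-torsion, whereas the conjecture speaks of $E_{\mathrm{tors}}(\Q)$. One knows by Mazur's theory that $f$ is then congruent mod $\ell$ to an Eisenstein series, but an Eisenstein congruence does \emph{not} meet the definition used in this paper, so one would have to upgrade it to a congruence with a \emph{cuspidal} old form, or else prove directly that $\ell\mid|E_{\mathrm{tors}}(\Q)|$. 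Bounding $\phi$ and $\psi$ via Mazur's classification of rational $\ell$-isogenies over $\Q$, controlling their ramification at the remaining bad primes, and passing from the Eisenstein ideal to an honest cuspidal congruence all enter at this point, and it seems quite possible that a clean statement requires first replacing $E$ by the optimal curve in its isogeny class. Finally, for the exceptional case (ii): in characteristic $3$ every element of order $3$ in $\mathrm{GL}_2(\F_3)$ is unipotent, so inertia at $p$ acts through a unipotent group and $\bar\rho_{E,3}$ has conductor exponent at most $1$ at $p$ despite the additive reduction; the argument of case (i) then adapts, the one genuinely new point being $p=3$, where wild ramification at the residue characteristic must be dealt with separately.
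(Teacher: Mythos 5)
The statement you are trying to prove is Conjecture~\ref{conj1}: it is an open conjecture of Agashe, and the paper contains no proof of it. The paper only quotes a partial result from \cite{Aga11} (the Proposition in Section 5, which assumes extra hypotheses on $\ell$ and concludes either that $E[\ell]$ is reducible or that $f$ is congruent to a newform of level dividing $N/p$, only at Fourier coefficients with index coprime to $N\ell$ and only modulo a prime ideal above $\ell$ in a larger coefficient field), and its actual contribution is an algorithm for testing the conjecture numerically. So no proof attempt can be ``the same as the paper's''; the only question is whether your sketch closes the conjecture, and it does not.

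Your outline in the irreducible case is reasonable and essentially retraces the known partial result: odd $\ell\mid c_p$ forces split multiplicative reduction with $\ell\mid v_p(q_E)$ (away from the $\ell=3$, type IV/IV$^{\ast}$ exception), the Tate-curve computation makes $\bar\rho$ unramified (or finite flat) at $p$, and Ribet/Mazur level lowering puts the eigensystem at level $N/p$. But two genuine gaps remain. First, the reducible case, which you yourself flag: when the quotient $E/C$ rather than $E$ carries the rational $\ell$-point, you get at best an Eisenstein congruence, and converting that into either $\ell\mid|E_{\mathrm{tors}}(\Q)|$ or a congruence with a \emph{cuspidal integral} old form is exactly the part nobody has supplied; note the cited Proposition explicitly excludes this case by allowing the alternative ``$E[\ell]$ reducible.'' Second, even in the irreducible case your step from ``the mod-$\ell$ eigensystem occurs at level $N/p$'' to ``there is $g$ in the old space with $a_n(g)\equiv a_n(f)\pmod{\ell}$ for \emph{all} $n$'' is asserted rather than proved: level lowering controls $a_q$ only for $q\nmid N\ell$, and matching the coefficients at $p$, at primes dividing $N/p$, and at $\ell$ requires choosing the right linear combination of the degeneracy images $\beta_1(g)$, $\beta_p(g)$ (a $p$-stabilization) and an integrality/rationality argument over $\F_\ell$; this is precisely the distance between the Proposition quoted in the paper and Conjectures~\ref{conj1} and \ref{conj2}, so it cannot be waved through. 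As it stands, your text is a plausible program, not a proof, and the conjecture remains open.
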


The following partial result towards Conjecture \ref{conj1} is given in \cite{Aga11}.

\begin{prop}
Let $\ell$ be an odd prime such that either $\ell \nmid N$ or for all primes r that divide $N$, $\ell \nmid (r-1)$. If $\ell$ divides the order of the geometric component group of $E$ at $p$ for some prime $p||N$, then either $E[\ell]$ is reducible or the newform $f$ is congruent to a newform of level dividig $N/p$ (for all Fourier coefficients whose indices are coprime to $N\ell$) modulo a prime ideal over $\ell$ in a number field containing the Fourier coefficients of both newforms. 
\end{prop}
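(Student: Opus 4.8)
The plan is to translate the hypothesis into a statement about the mod $\ell$ Galois representation $\overline{\rho}=\overline{\rho}_{E,\ell}\colon \mathrm{Gal}(\overline{\Q}/\Q)\to \mathrm{GL}_{2}(\F_{\ell})$ acting on $E[\ell]$, and then to apply Ribet's level-lowering theorem. If $E[\ell]$ is reducible as a Galois module the first alternative of the conclusion holds, so we may assume $\overline{\rho}$ is irreducible; it is moreover odd (complex conjugation acts with determinant $-1$ since $E$ is defined over $\Q$) and modular of weight $2$, trivial nebentypus, and level $N$, with $\det\overline{\rho}$ the mod $\ell$ cyclotomic character.

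First I would show $\overline{\rho}$ is unramified at $p$. As $p\|N$, the curve $E$ has multiplicative reduction at $p$, so Tate's theory gives an isomorphism $E_{\overline{\Q}_{p}}\cong \mathbb{G}_{m}/q_{E}^{\Z}$, defined over $\Q_{p}$ up to an unramified quadratic twist in the non-split case, with $v_{p}(q_{E})=-v_{p}(j_{E})$ equal to the order of the geometric component group of $E$ at $p$, hence divisible by $\ell$. Writing $E[\ell]$ through the Tate parametrization as the group generated by $\zeta_{\ell}$ and a choice of $q_{E}^{1/\ell}$, one has $\Q_{p}(E[\ell])\subseteq \Q_{p}(\zeta_{\ell},q_{E}^{1/\ell})$, and when $\ell\neq p$ this extension of $\Q_{p}$ is unramified precisely because $\ell\mid v_{p}(q_{E})$ (Kummer theory: an $\ell$-th root of a $p$-adic number of valuation divisible by $\ell$ generates an unramified extension when $\ell\neq p$). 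So inertia at $p$ acts trivially on $E[\ell]$. If instead $p=\ell$, the same divisibility shows $\overline{\rho}$ is finite at $\ell$, which is the local condition needed to remove $\ell$ from the level below.

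Next I would apply Ribet's level-lowering theorem to the irreducible, odd, modular representation $\overline{\rho}$. Its Artin conductor away from $\ell$ divides $N$ and, by the previous step, is prime to $p$, so it divides $N/p$. The arithmetic hypothesis on $\ell$---that $\ell\nmid N$, or else $\ell\nmid(r-1)$ for every prime $r\mid N$---is used at this point to control the local structure of $\overline{\rho}$ at $\ell$ and at the remaining bad primes, pinning the Serre weight to $2$ and the nebentypus to trivial. With this, level lowering (supplemented, when $\ell\mid N$, by removal of $\ell$ from the level using that $\overline{\rho}$ is finite at $\ell$) produces a newform $g$ of weight $2$ and trivial character on $\Gamma_{0}(M)$ with $M\mid N/p$, together with an isomorphism $E[\ell]\cong \overline{\rho}_{g,\lambda}$ for some prime $\lambda$ above $\ell$ in the ring of integers of the Hecke eigenvalue field of $g$.

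Finally, for every prime $q\nmid N\ell$ both $E[\ell]$ and $\overline{\rho}_{g,\lambda}$ are unramified at $q$, so by the Brauer--Nesbitt theorem and Chebotarev density their Frobenius traces agree modulo $\lambda$, i.e. $a_{q}(f)\equiv a_{q}(g)\pmod{\lambda}$; multiplicativity of Hecke eigenvalues extends this to every index coprime to $N\ell$, which is the claim (after enlarging the number field so that it also contains the rational coefficients of $f$). The main obstacle is the level-lowering step: one must verify the local hypotheses of Ribet's theorem with care---especially that the Serre weight is $2$ and the nebentypus trivial, which is exactly where the hypothesis on $\ell$ is consumed---and must treat the case $p=\ell$ through finiteness at $\ell$ rather than unramifiedness. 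Confirming that the output is a genuine newform of the stated level is then routine, irreducibility having already excluded an Eisenstein source.
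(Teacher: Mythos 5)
The paper itself states this proposition without proof --- it is imported verbatim from \cite{Aga11} --- so there is no in-paper argument to compare against; I can only judge your outline on its merits, and it is essentially the standard (and almost certainly the intended) route: use the Tate parametrization to convert $\ell \mid v_{p}(q_{E})$ (the order of the geometric component group at $p$, since $p\|N$ forces multiplicative reduction) into unramifiedness of $E[\ell]$ at $p$ when $p\neq\ell$, or finiteness at $\ell$ when $p=\ell$; assume $E[\ell]$ irreducible and lower the level; then compare Frobenius traces via Chebotarev and extend to all indices coprime to $N\ell$ by the Hecke recursion. The one point where your account is imprecise is the role of the hypothesis on $\ell$. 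It is not needed to pin down the nebentypus (that is automatic, since $\det\overline{\rho}$ is the mod $\ell$ cyclotomic character) nor the Serre weight at primes away from $\ell$; rather, it is what lets you quote a level-lowering theorem in each branch of the dichotomy. When $\ell\nmid N$, Ribet's theorem applies as stated. When $\ell\mid N$, Ribet's original result does not apply, and the condition $\ell\nmid(r-1)$ for every prime $r\mid N$ --- in particular $p\not\equiv 1\pmod{\ell}$ --- is precisely the hypothesis of Mazur's principle (and of the later refinements treating $\ell\mid N$), which is the tool one invokes instead; the case $p=\ell$ is the separate ``removing $\ell$ from the level'' statement, for which finiteness at $\ell$ is the right input, as you say. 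If you replace your vague ``control the local structure, pinning the weight and nebentypus'' by this precise attribution of the hypothesis to the level-lowering theorems being cited, the argument is complete.
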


In fact, in \cite{Aga11} the author conjectures more specifically that:

\begin{conj}[Agashe]\label{conj2}
If an odd prime $\ell$ divides $c_{p}$ for some prime $p$, then either $\ell$ divides the order of $E_{\mathrm{tors}}(\Q)$ or the newform $f$ is congruent modulo $\ell$ to a form in the subspace generated by degeneracy map images of newforms of levels dividing $N/p$.
\end{conj}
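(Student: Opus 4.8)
The plan is to deduce Conjecture~\ref{conj2} from Ribet's level-lowering theorem, using Tate's uniformization to convert divisibility of the Tamagawa number $c_p$ into a ramification condition on the mod-$\ell$ Galois representation $\overline{\rho}_{E,\ell}$, and then to feed the component-group input back in to promote a congruence of Galois representations to a congruence of $q$-expansions landing in the old space. First I would separate the case $p\|N$ (multiplicative reduction) from $p^2\mid N$ (additive reduction): in the additive case the component group has order at most $4$, so an odd $\ell\mid c_p$ forces $\ell=3$ and Kodaira type $IV$ or $IV^{*}$ with $c_p=3$, a small exceptional case. The heart is the multiplicative case. Here I would first show that for odd $\ell$, $\ell\mid c_p$ forces $E$ to have \emph{split} multiplicative reduction at $p$ (in the non-split case $c_p\mid 2$) and $\ell\mid v_p(q_E)$ for the Tate parameter $q_E$; equivalently, $\overline{\rho}_{E,\ell}$ is unramified at $p$, and $\mathrm{Frob}_p$ acts on the semisimplification of $\overline{\rho}_{E,\ell}$ with eigenvalues $1$ and $p$ modulo $\ell$.

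Assuming $\overline{\rho}_{E,\ell}$ is irreducible, one then invokes Ribet's theorem: $\overline{\rho}_{E,\ell}$ is modular of level $N$, unramified at the prime $p\|N$ with $p\neq\ell$, and (granting the mild hypotheses on $\ell$ and the primes dividing $N$ that already appear in the Proposition above, to avoid Eisenstein obstructions) satisfies the hypotheses for lowering the level, hence arises from a newform $g$ of weight $2$ and level $N'\mid N/p$. This yields $a_n(g)\equiv a_n(f)$ modulo a prime $\lambda\mid\ell$ in the coefficient field of $g$ for all $n$ coprime to $N\ell$ --- which is exactly the conclusion of the Proposition.

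It remains to upgrade this to the assertion of Conjecture~\ref{conj2}: a congruence modulo $\ell$ \emph{for all} $n$ to a form \emph{with integer coefficients} in the $\Z$-span of the degeneracy images $\beta_1(g_i),\beta_p(g_i)$ of newforms of level dividing $N/p$. I would work with the mod-$\ell$ Hecke eigensystem $\overline{\phi}_f\colon\T\to\F_\ell$ attached to $f$ (genuinely $\F_\ell$-valued, since $f$ has integer coefficients) and the associated maximal ideal $\mathfrak m$, and show the $\mathfrak m$-localization of the old space is nonzero with an eigenvector matching $f$ at every prime. The key local computation is again at $p$: on the $T_p$-stable plane $\langle\beta_1(g),\beta_p(g)\rangle$ the operator $T_p$ has characteristic polynomial $X^2-a_p(g)X+p$, and comparing the action of $\mathrm{Frob}_p$ on $\overline{\rho}_{g,\lambda}\cong\overline{\rho}_{E,\ell}$ gives $a_p(g)\equiv 1+p\pmod{\lambda}$, so this polynomial is $(X-1)(X-p)\bmod\ell$ and possesses an eigenvector with $T_p$-eigenvalue $a_p(f)=1$; matching $a_\ell$ uses a finite-flat / Fontaine--Laffaille argument at $\ell$. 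An integrality argument --- the reduction map on $M_X$ surjects onto the mod-$\ell$ old space, and one checks the $\mathfrak m$-eigenvector is in its image --- then produces a form with integer coefficients satisfying the criterion of Lemma~\ref{biglemma}.

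The main obstacle is the \emph{reducible} case, together with the exceptional cases above. If $\overline{\rho}_{E,\ell}$ is reducible, Ribet's theorem does not apply; Mazur's classification of rational isogenies of prime degree only furnishes a rational $\ell$-isogeny $E\to E'$, and such an isogeny does \emph{not} in general force $\ell\mid|E_{\mathrm{tors}}(\Q)|$. Extracting the ``$\ell$ divides the order of $E_{\mathrm{tors}}(\Q)$'' alternative from $\ell\mid c_p$ in this case appears to require a delicate case analysis over the finitely many possible $\ell$, using explicit relations between $c_p$ and the kernel of the isogeny, that I do not see how to carry out uniformly --- I believe this is essentially why the statement is still only a conjecture. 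Similarly, the additive case $\ell=3$, $c_p=3$ is not governed by a Tate curve and needs a level-lowering input at an additive prime (in the spirit of Carayol's work), and when $\ell\mid N$ with $\ell\mid r-1$ for some $r\mid N$ the Eisenstein obstructions to Ribet's theorem must be confronted head-on. I would accordingly present the split-multiplicative, irreducible, non-Eisenstein case as a theorem and flag these residual cases as the genuine obstruction.
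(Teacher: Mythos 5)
The statement you have attempted to prove is presented in the paper only as a conjecture (Agashe's Conjecture~\ref{conj2}, from \cite{Aga11}): the paper offers no proof, and the strongest thing it records in this direction is the Proposition, a partial result toward Conjecture~\ref{conj1} whose conclusion is deliberately weaker (congruence only for Fourier coefficients with index coprime to $N\ell$, modulo a prime ideal over $\ell$ in a larger coefficient field, with an ``$E[\ell]$ reducible'' escape clause). So there is no proof of record to compare against, and your sketch, as you yourself concede, does not supply one. The gaps you flag are genuine and decisive: when $\overline{\rho}_{E,\ell}$ is reducible, Ribet's level-lowering theorem is unavailable and $\ell \mid c_{p}$ does not visibly force $\ell \mid |E_{\mathrm{tors}}(\Q)|$ (a rational $\ell$-isogeny is not enough); the additive case and the Eisenstein-type cases with $\ell \mid (r-1)$ for $r \mid N$ are likewise untreated. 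These are precisely the configurations the conjecture is meant to cover, so what you have is at best a strategy for the split-multiplicative, irreducible, non-Eisenstein case.

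Even in that favorable case, your ``upgrade'' paragraph papers over the step where the real difficulty lies. Level lowering gives a congruence of Hecke eigenvalues, away from $N\ell$, with a newform $g$ of level dividing $N/p$ whose coefficients need not be integers; Conjecture~\ref{conj2} demands a form with a full $q$-expansion congruence (all $n$, including $n$ divisible by $p$, by $\ell$, and by the other primes dividing $N$) to an element of the $\Z$-span of the degeneracy images $\beta_{1}(g), \beta_{p}(g)$, and Lemma~\ref{biglemma} only converts such a full congruence into a row-space criterion --- it does not produce one. Your appeal to multiplicity one, Fontaine--Laffaille at $\ell$, and surjectivity of reduction on the integral old space is exactly the content that would need to be proved, and the fact that the paper's quoted Proposition stops short of these refinements is a signal that they are not routine. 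The paper itself treats the statement as open and builds the algorithm of Sections 2--4 precisely to test it numerically; your write-up should present the favorable case as a conditional result and the remaining cases as the open problem, rather than as a proof of the conjecture.
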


We became interested in computing congruence primes when trying to test Conjectures 1 and 2. Agashe was able to test these conjectures using the SAGE \cite{SAGE} command congruence\_number() for curves of conductor up to 1000, at which point computations became too slow to gather more data. The method we present is fast up to level 1500 and is fast up to much higher levels when the levels $N$ are nonsmooth (e.g. if $N$ is a product of two primes).

The algorithm used in congruence\_number() in SAGE is designed to calculate the congruence number between any two disjoint subspaces $X$ and $Y$ of $S_{2}(\Gamma_{0}(N))$. It does so by computing the integral structures, $X_{IS}$ and $Y_{IS}$, of the modular symbol spaces associated to $X$ and $Y$, and finding the index of $X_{IS} +Y_{IS}$ in its saturation - a process that requires: 
\begin{enumerate}
\item computing a $\Z$-basis for the modular symbol spaces corresponding to $X$ and $Y$
\item calculating the Hermite normal form $H$ of the matrix associated to $X_{IS}+Y_{IS}$
\item finding the Hermite normal form $M$ of $H^{T}$
\item taking the determinant of $M$.
\end{enumerate}

This method can be modified to check only if a prime $p$ is a congruence prime by performing steps 2-4 modulo $p$, but we do not know whether or not such an approach would detect all congruence primes without false negatives, and we do not know how efficient the modified algorithm would be. 

In the special case that $X$ is the span of a newform with integer coefficients and $Y$ is the old space, the method we described for computing congruence primes is faster than congruence\_number() for two reasons. First, the results in Section \ref{deciding} allow us to perform operations modulo certain primes. Second, as described in Section \ref{thisref}, when $Y$ is the old space, the problem of finding an integral basis for $Y$ can be reduced to computing integral bases of the cuspidal spaces at lower levels, which is already efficient in SAGE.

\bibliography{bib2}{}
\bibliographystyle{amsalpha}

\end{document}